\begin{document}

\title*{Multi-occupation field generates the Borel-sigma-field of loops.}
\author{Yinshan Chang}
\institute{Yinshan Chang \at Max Planck Institute for Mathematics in the Sciences, 04103 Leipzig, Germany, \email{ychang@mis.mpg.de}
\and This is part of author's PHD work in Department of Mathematics in Universit\'{e} Paris Sud}
%
%
\maketitle

\abstract{In this article, we consider the space of c\`{a}dl\`{a}g loops on a Polish space $S$. The loop space can be equipped with a ``Skorokhod'' metric. Moreover, it is Polish under this metric. Our main result is to prove that the Borel-$\sigma$-field on the space of loops is generated by a class of loop functionals: the multi-occupation field. This result generalizes the result in the discrete case, see \cite{loop}.}

\section{Introduction}
\label{sec:1}
The Markovian loops have been studied by Le Jan \cite{loop} and Sznitman \cite{Sznitman}. Under reasonable assumptions of the state space, as an application of Blackwell's theorem, we would like to prove that multi-occupation field generates the Borel-$\sigma$-field on the space of loops, see Theorem \ref{thm:main theorem}. This generalizes the result in \cite{loop}, see the paragraph below Proposition 10 in Chapter 2 of \cite{loop}. For self-containedness, we introduce several necessary definitions and notations in the following paragraphs.

\bigskip

Let $(S,d_S)$ be a Polish space with the Borel-$\sigma$-field. As usual, denote by $D_S([0,a])$  the Skorokhod space, i.e. the space of c\`{a}dl\`{a}g\footnote{The terminology ``c\`{a}dl\`{a}g'' is short for right-continuous with left hand limits.}-paths over time interval $[0,a]$ which is also left-continuous at time $a$. We equip it with the Skorokhod metric and the corresponding Borel-$\sigma$-field.
\begin{definition}[Based loop]
A based loop is an element $l\in D_S([0,t])$ for some $t>0$ such that $l(0)=l(t)$. We call $t$ the duration of the based loop and denote it by $|l|$.
\end{definition}
\begin{definition}[Loop]
We say two based loops are equivalent iff. they are identical up to some circular translation. A loop is defined as an equivalence class of based loops. For a based loop $l$, we denote by $l^{o}$ its equivalence class.
\end{definition}

\begin{definition}[Multi-occupation field/time]
Define the rotation operator $r_j$ as follows: $r_j(z^1,\ldots,z^n)=(z^{1+j},\ldots,z^n,z^1,\ldots,z^j)$.
For any $f:S^n\rightarrow\mathbb{R}$ measurable, define the multi-occupation field of based loop $l$ of time duration $t$ as $$\langle l,f\rangle=\sum\limits_{j=0}^{n-1}\int\limits_{0<s^1<\cdots<s^n<t}f\circ r_j(l(s^1),\ldots,l(s^n))\,ds^1\cdots\,ds^n.$$
If $l_1$ and $l_2$ are two equivalent based loops, they correspond to the same multi-occupation field. Therefore, the multi-occupation field is well-defined for loops. For discrete $S$, define the multi-occupation time $\hat{l}^{x_1,\ldots,x_n}$ of a (based) loop to be $\langle l,1_{(x_1,\ldots,x_n)}(\cdot)\rangle$ where
$$1_{(x_1,\ldots,x_n)}((y_1,\ldots,y_n))=\left\{\begin{array}{ll}
1 & \text{ if }(y_1,\ldots,y_n)=(x_1,\ldots,x_n)\\
0 & \text{ otherwise.}
\end{array}\right.$$
\end{definition}

The following idea for defining the distance of loops is due to Titus Lupu. Given two based loops $l_1$ and $l_2$, they can be normalized to have duration $1$ by linear time scaling. Denote them $l_1^{\text{normalized}}$ and $l_2^{\text{normalized}}$. As $S$ is Polish, by Theorem 5.6 in \cite{KurtzMR838085}, the Skorokhod space $(D_{S}([0,1]),d)$ is also Polish under the following metric:
\begin{equation}\label{eq:intro.1}
 d(l_1,l_2)\overset{\text{def}}{=}\inf\limits_{\lambda}\left(\sup\limits_{s<t}\left|\log\frac{\lambda(t)-\lambda(s)}{t-s}\right|+\sup\limits_{u\in[0,1]}d_S(l_1(\lambda(u)),l_2(u))\right)
\end{equation}
where the infimum is taken over all increasing bijections $\lambda:[0,1]\rightarrow [0,1]$. Then, it is straightforward to see that the space of based loops under the following metric $D$ is also Polish:
$$D(l_1,l_2)\overset{\text{def}}{=}\Big||l_1|-|l_2|\Big|+d(l_1^{\text{normalized}},l_2^{\text{normalized}}).$$
\begin{definition}[Distance on loops]
Define the distance $D^{o}$ of two loops $l_1^{o}$ and $l_2^{o}$ by
$$D^{o}(l_1^{o},l_2^{o})\overset{\text{def}}{=}\inf\{D(l,l'):l\in l_1^{o}\text{ and }l'\in l_2^{o}\}.$$
\end{definition}
\begin{remark}
 This is not the standard way to define a pseudo metric on quotient space. In general, the above definition does not satisfy the triangular inequality. In this special situation, the distance $D$ is in fact invariant under suitable circular translation which guarantees the triangular inequality.
\end{remark}

We provide the proofs of the following three propositions in Section Appendix.
\begin{proposition}\label{well defined}
 The above distance $D^o$ is well-defined.
\end{proposition}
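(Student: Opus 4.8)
The aim is to check that $D^o$ is a genuine metric on the set of loops: non-negativity, symmetry and finiteness are immediate (finiteness after replacing $d_S$ by the equivalent bounded metric $d_S\wedge 1$, which does not affect the Borel structure), so the whole content lies in the separation property and the triangle inequality, and the plan is to deduce both from a single invariance fact together with the fact, recalled in the excerpt, that $D$ is a metric on the space of based loops.

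The invariance fact I would isolate is this. After the linear time rescaling, comparing representatives of two loops amounts to comparing $\theta_\alpha m_1$ with $\theta_\beta m_2$, where $m_i\in D_S([0,1])$ are the normalized loops ($m_i(0)=m_i(1)$) and $\theta_\gamma$ denotes the rotation of a loop viewed on the time circle $\mathbb{R}/\mathbb{Z}$. The claim is that the Skorokhod metric $d$ is invariant under \emph{simultaneous} rotation: $d(\theta_\gamma m_1,\theta_\gamma m_2)=d(m_1,m_2)$ for all $\gamma$. Granting this and noting that durations are unaffected by rotation, for any representative $l_2$ of $l_2^o$ one can transfer an arbitrary rotation of $l_2$ onto $l_1$ without changing $D$, so $\inf_{l_1\in l_1^o}D(l_1,l_2)$ does not depend on the choice of $l_2$ and hence equals $D^o(l_1^o,l_2^o)$. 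The triangle inequality for $D^o$ then follows from that for $D$ by taking infima over $l_1$ and $l_3$ with $l_2$ held fixed:
\[
D^o(l_1^o,l_3^o)\le \inf_{l_1,l_3}\bigl(D(l_1,l_2)+D(l_2,l_3)\bigr)=\inf_{l_1}D(l_1,l_2)+\inf_{l_3}D(l_2,l_3)=D^o(l_1^o,l_2^o)+D^o(l_2^o,l_3^o).
\]
For separation, if $D^o(l_1^o,l_2^o)=0$ then $|l_1|=|l_2|$ and, by the invariance, there are rotations $\beta_n$ with $d(\theta_{\beta_n}m_1,m_2)\to 0$; extracting a subsequence with $\beta_n\to\beta$ and using that convergence of $\theta_{\beta_n}m_1$ in $D_S([0,1])$ forces the limit to be $\theta_\beta m_1$ with $\beta$ a continuity time of $m_1$, we get $m_2=\theta_\beta m_1$, i.e. $l_1^o=l_2^o$.

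The main obstacle is the invariance $d(\theta_\gamma m_1,\theta_\gamma m_2)=d(m_1,m_2)$ itself. Given an increasing bijection $\lambda$ of $[0,1]$ nearly attaining $d(m_1,m_2)$, the natural transported time change $u\mapsto\bigl(\lambda((u+\gamma)\bmod 1)-\gamma\bigr)\bmod 1$ has the same quantity $\sup_{s<t}\bigl|\log\tfrac{\lambda(t)-\lambda(s)}{t-s}\bigr|$ and the same supremal spatial mismatch, but it is a homeomorphism of the circle $\mathbb{R}/\mathbb{Z}$ that need not fix the base point $0$, hence is not literally an admissible time change for the interval $[0,1]$. I would remove this obstruction by carrying out the argument on the time circle: realize a normalized loop as a càdlàg map $\mathbb{R}/\mathbb{Z}\to S$, introduce the ``circular'' Skorokhod metric whose time changes are orientation-preserving circle homeomorphisms (for which rotation invariance is tautological), and check that this circular metric produces the same infimum over representatives as $D^o$. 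One inequality is clear since every interval time change is in particular a circular one; for the reverse, a circular time change can be cut at an instant that is a continuity point of both loops and unrolled into an interval time change of suitably rotated representatives without increasing the cost, up to an error made arbitrarily small by choosing the cut near the optimum. The co-countably many rotations by jump times that fail to give a bona fide càdlàg loop are dealt with by restricting to the dense set of continuity times and passing to the limit, which is also what makes the subsequential arguments in the separation step legitimate.
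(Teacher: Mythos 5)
Your proposal is correct and follows essentially the same route as the paper: the key invariance you isolate (simultaneous rotation leaves $D$ unchanged, so the infimum over a class is independent of the representative) is exactly the content of the paper's Lemma~\ref{lem:invariant under circular translation}, and your fix for rotations at jump times --- cut the transported circle time change at a nearby continuity point and pass to the limit using continuity of $h\mapsto\Theta_h l$ --- mirrors the paper's Lemmas~\ref{lem:invariant under regathering} and~\ref{lem:continuous under circular translation}. The ``circular Skorokhod metric'' is only a repackaging of the same argument, and your separation step (extract $\beta_n\to\beta$, identify the limit as a rotation at a continuity time) is the paper's verbatim.
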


\begin{proposition}\label{Polish}
The loop space is Polish under the metric $D^o$.
\end{proposition}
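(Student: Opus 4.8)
The plan is to split off the duration parameter, reducing the problem to one about normalized loops, and then to prove separability and complete metrizability of the reduced space. Write $\mathcal{L}$ for the space of based loops, $\pi\colon l\mapsto l^{o}$ for the quotient map onto the loop space $\mathcal{L}^{o}$, and $\mathcal{L}_{0}=\{l\in D_{S}([0,1]):l(0)=l(1)\}$ for the normalized based loops. The set $\mathcal{L}_{0}$ is closed in the Polish space $D_{S}([0,1])$: under the conventions fixed above, convergence in the Skorokhod metric $d$ forces convergence of the values at the two endpoints $0$ and $1$, so $l\mapsto d_{S}(l(0),l(1))$ is continuous on $D_{S}([0,1])$ and $\mathcal{L}_{0}$ is its zero set; in particular $(\mathcal{L}_{0},d)$ is Polish. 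Put $\widetilde{\mathcal{L}}=\mathcal{L}_{0}/\!\sim$ and $d^{o}(g^{o},h^{o})=\inf\{d(g',h'):g'\in g^{o},\,h'\in h^{o}\}$; by the same reasoning as for $D^{o}$ in Proposition~\ref{well defined}, $d^{o}$ is a metric on $\widetilde{\mathcal{L}}$. Since linear time scaling intertwines circular translation of a based loop of duration $t$ with circular translation of its normalization, and since the duration is circular-translation invariant and enters $D$ additively, a direct computation from the definitions shows that $l^{o}\mapsto\bigl(|l|,(l^{\text{normalized}})^{o}\bigr)$ is an isometry from $(\mathcal{L}^{o},D^{o})$ onto $(0,\infty)\times\widetilde{\mathcal{L}}$ equipped with the sum metric. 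As $(0,\infty)$ is Polish (it is homeomorphic to $\mathbb{R}$) and a finite product of Polish spaces is Polish, it suffices to prove that $(\widetilde{\mathcal{L}},d^{o})$ is Polish.

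Separability is easy: the projection $\mathcal{L}_{0}\to\widetilde{\mathcal{L}}$ is a $1$-Lipschitz surjection and $\mathcal{L}_{0}$ is separable, so $\widetilde{\mathcal{L}}$ is separable.

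For completeness I would run the usual Cauchy-subsequence argument, being careful about the choice of representatives. Given a $d^{o}$-Cauchy sequence, pass to a subsequence $(g_{n}^{o})_{n}$ with $d^{o}(g_{n}^{o},g_{n+1}^{o})<2^{-n}$. The key point --- exactly the invariance of $d$ under simultaneous circular translation of both of its arguments, recorded in the Remark --- is that for any \emph{fixed} representative $g\in g^{o}$ one has $d^{o}(g^{o},h^{o})=\inf\{d(g,h'):h'\in h^{o}\}$. Hence, choosing $g_{1}\in g_{1}^{o}$ arbitrarily and then recursively $g_{n+1}\in g_{n+1}^{o}$ with $d(g_{n},g_{n+1})<2^{-n}$, one obtains a $d$-Cauchy sequence $(g_{n})_{n}$ in the complete space $(\mathcal{L}_{0},d)$; it converges to some $g_{\infty}\in\mathcal{L}_{0}$, and then $d^{o}(g_{n}^{o},g_{\infty}^{o})\le d(g_{n},g_{\infty})\to 0$. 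Thus $(\widetilde{\mathcal{L}},d^{o})$ is complete, hence Polish, and therefore $\mathcal{L}^{o}$ is Polish.

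The step that invokes the Remark is the one I expect to demand the most care. First, circular translation has to be made into an honest map on $\mathcal{L}_{0}$: when a loop has a jump exactly at the time by which one translates, the value at the new ``seam'' must be regularized so that the rotated path is again c\`{a}dl\`{a}g and left-continuous at time $1$. One then has to check that $d$ is genuinely unchanged when both arguments are rotated by the same amount, which is where the interaction between rotations and the Skorokhod time changes in \eqref{eq:intro.1} must be controlled. A minor but noteworthy point: $D^{o}$ is \emph{not} itself complete, because a Cauchy sequence of loops whose durations shrink to $0$ has no limit in $\mathcal{L}^{o}$; this is precisely why the argument routes through the homeomorphism with $(0,\infty)\times\widetilde{\mathcal{L}}\cong\mathbb{R}\times\widetilde{\mathcal{L}}$ rather than trying to prove $D^{o}$ complete. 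Equivalently, one may replace $D^{o}$ by the topologically equivalent complete metric $D^{o}(l_{1}^{o},l_{2}^{o})+\bigl|\,|l_{1}|^{-1}-|l_{2}|^{-1}\bigr|$.
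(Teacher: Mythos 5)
Your proof is correct and, at its core, follows the same strategy as the paper: separability via the $1$-Lipschitz quotient map from the separable based-loop space, and completeness via a fast Cauchy subsequence together with the translation-invariance of $D$ (Lemma \ref{lem:invariant under circular translation}, which is what justifies your identity $d^{o}(g^{o},h^{o})=\inf\{d(g,h'):h'\in h^{o}\}$ for a \emph{fixed} representative $g$) to choose representatives forming a Cauchy sequence of based loops. The genuine difference is your factorization of the loop space as $(0,\infty)\times\widetilde{\mathcal{L}}$ with the sum metric, and you are right that this is not merely cosmetic: $D$, and hence $D^{o}$, is \emph{not} a complete metric, since constant loops of durations $1/n$ form a $D$-Cauchy sequence with no limit in the space. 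The paper's one-line appeal to ``the completeness of $D$'' is therefore literally false and should be read as applying only after one notes that the durations of a Cauchy sequence converge and, when their limit is positive, the argument goes through; your route through the Polish factor $(0,\infty)$ (equivalently, replacing $D^{o}$ by the complete metric $D^{o}(l_{1}^{o},l_{2}^{o})+\bigl||l_{1}|^{-1}-|l_{2}|^{-1}\bigr|$) is the clean repair, since Polishness only requires some equivalent complete metric. The remaining ingredients you supply --- closedness of $\mathcal{L}_{0}$ in $D_{S}([0,1])$ (using $\lambda(0)=0$, $\lambda(1)=1$ for every admissible time change), the isometry $l^{o}\mapsto(|l|,(l^{\text{normalized}})^{o})$, and the recursive choice of representatives --- all check out, so your argument is a correct and slightly more careful version of the paper's.
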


Then, we equip the loop space with the Borel-$\sigma$-field. The next proposition states the measurability of the multi-occupation field.

\begin{proposition}\label{measurable}
 Fix any bounded Borel measurable function $f$ on $S^n$, the following map is Borel measurable functional on the loop space:
$$l\rightarrow\langle l,f\rangle.$$
\end{proposition}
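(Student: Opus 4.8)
The plan is to establish joint Borel measurability of the time‑evaluation map on the Skorokhod space, then to integrate it out, and finally to transfer the conclusion first across the time‑rescaling and then across the quotient by circular translation.

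\emph{Step 1 (joint measurability of evaluation).} I would first show that the map $D_S([0,1])\times[0,1]\to S$, $(\tilde l,u)\mapsto\tilde l(u)$, is Borel measurable. For a fixed time $t\in[0,1]$ the evaluation $\tilde l\mapsto\tilde l(t)$ is Borel on $D_S([0,1])$; indeed, for Polish $S$ it is classical that the Borel $\sigma$‑field of the Skorokhod space is generated by the coordinate projections. Hence, for each $m$, the discretization $\tilde l_m(u):=\tilde l(\min\{(\lfloor mu\rfloor+1)/m,\,1\})$ is Borel in $(\tilde l,u)$: on each set of the finite Borel partition $\{\,[k/m,(k+1)/m):k<m\,\}\cup\{\{1\}\}$ of the time axis it is the composition of the projection onto $D_S([0,1])$ with an evaluation at a fixed time. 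By right‑continuity of $\tilde l$ one has $\tilde l_m(u)\to\tilde l(u)$ in $S$ for every $(\tilde l,u)$, and a pointwise limit of Borel maps into a metric space is Borel; so $(\tilde l,u)\mapsto\tilde l(u)$ is Borel. Consequently $(\tilde l,u^1,\dots,u^n)\mapsto(\tilde l(u^1),\dots,\tilde l(u^n))\in S^n$ is Borel, and therefore, since $f$ is bounded Borel and each $r_j$ merely permutes coordinates, so is $(\tilde l,u^1,\dots,u^n)\mapsto\mathbf 1_{\{0<u^1<\cdots<u^n<1\}}\,f\circ r_j(\tilde l(u^1),\dots,\tilde l(u^n))$.

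\emph{Step 2 (integrate and rescale).} The integrand in Step~1 is bounded and jointly Borel, so Tonelli's theorem gives that
\begin{equation*}
\Phi_f(\tilde l):=\sum_{j=0}^{n-1}\int_{0<u^1<\cdots<u^n<1}f\circ r_j(\tilde l(u^1),\dots,\tilde l(u^n))\,du^1\cdots du^n
\end{equation*}
is a Borel functional on $D_S([0,1])$. The substitution $s^i=|l|\,u^i$ in the definition of $\langle l,f\rangle$ yields $\langle l,f\rangle=|l|^{\,n}\,\Phi_f(l^{\text{normalized}})$ for every based loop $l$; since $l\mapsto(|l|,l^{\text{normalized}})$ is a homeomorphism onto its image in $(0,\infty)\times D_S([0,1])$ and $l\mapsto|l|^{\,n}$ is continuous, $l\mapsto\langle l,f\rangle$ is Borel measurable on the based loop space.

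\emph{Step 3 (pass to loops).} The canonical surjection $\pi$ from the based loop space onto the loop space satisfies $D^o(\pi(l_1),\pi(l_2))\le D(l_1,l_2)$, hence is continuous; both spaces are Polish (the based loop space by the construction recalled above, the loop space by Proposition \ref{Polish}). Because $\langle\,\cdot\,,f\rangle$ is constant on equivalence classes, for every $c\in\mathbb R$ the set $\{l^o:\langle l^o,f\rangle>c\}$ equals the $\pi$‑image of the Borel set $\{l:\langle l,f\rangle>c\}$, while its complement equals the $\pi$‑image of $\{l:\langle l,f\rangle\le c\}$; both images are analytic, so $\{l^o:\langle l^o,f\rangle>c\}$ is analytic with analytic complement, hence Borel by Souslin's theorem. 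Thus $l^o\mapsto\langle l^o,f\rangle$ is Borel on the loop space, which is the claim.

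\emph{Main difficulty.} The crux is Step~1, namely extracting joint measurability from the (non‑continuous) coordinate maps; the discretization that converges pointwise by right‑continuity is what makes this go through. A secondary subtlety is Step~3: transporting measurability through the quotient uses a descriptive‑set‑theory input (Souslin's theorem, or else a Borel section of $\pi$) rather than a purely metric argument.
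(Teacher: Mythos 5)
Your proof is correct, but it takes a genuinely different route from the paper's. The paper's argument is two lines: for bounded \emph{continuous} $f$ the functional $l\mapsto\langle l,f\rangle$ is continuous on the loop space, hence Borel, and the functional monotone class ($\pi$--$\lambda$) theorem then extends measurability to all bounded Borel $f$, since the bounded continuous functions form a multiplicative class generating $\mathcal{B}(S^n)$. You instead fix a Borel $f$ from the start: you get joint measurability of the evaluation map on $D_S([0,1])\times[0,1]$ by discretizing time and invoking right-continuity, integrate with Tonelli, transfer along the isometry $l\mapsto(|l|,l^{\text{normalized}})$ to the based loop space, and descend to the quotient by observing that the level sets $\{l:\langle l,f\rangle>c\}$ are saturated, so their $\pi$-images are both analytic and co-analytic, hence Borel by Souslin. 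The paper's route buys brevity at the price of an unproved (and not entirely trivial) continuity claim for continuous $f$, which requires controlling the time changes in the Skorokhod metric, plus the monotone class machinery; your route avoids any continuity argument and treats all Borel $f$ uniformly, but pays with two heavier inputs: the classical identification of $\mathcal{B}(D_S([0,1]))$ with the $\sigma$-field generated by the coordinate projections, and the descriptive-set-theoretic step (bi-analytic $\Rightarrow$ Borel) needed to cross the quotient --- a step the paper never confronts because it argues directly on the loop space. Both proofs are complete; yours is longer but arguably more self-contained in what it actually verifies.
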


Our main result is the following theorem. 

\begin{theorem}\label{thm:main theorem}
The Borel-$\sigma$-field on the loops is generated by the multi-occupation field if $(S,d_S)$ is Polish.
\end{theorem}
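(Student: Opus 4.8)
The plan is to deduce the statement from Blackwell's theorem, in the form: a countably generated sub-$\sigma$-field of the Borel $\sigma$-field of a Polish space that separates points coincides with the whole Borel $\sigma$-field. Write $\mathcal{M}$ for the $\sigma$-field on the loop space generated by the functionals $l\mapsto\langle l,f\rangle$, $n\ge 1$, $f$ bounded Borel on $S^{n}$. By Proposition \ref{measurable} we have $\mathcal{M}\subseteq\mathcal{B}$, where $\mathcal{B}$ is the Borel $\sigma$-field of the loop space, which is Polish by Proposition \ref{Polish}; so it remains to check that $\mathcal{M}$ is countably generated and separates points. Countable generation is routine: for fixed $n$, $l\mapsto\langle l,\cdot\rangle$ is the integral against a finite Borel measure $\Phi_{n}(l)$ on the Polish space $S^{n}$, and a functional monotone class argument shows $\sigma(\langle\cdot,f\rangle:f\text{ bounded Borel on }S^{n})$ is already generated by $\langle\cdot,f\rangle$ for $f$ ranging over a fixed countable algebra of bounded continuous functions generating $\mathcal{B}(S^{n})$; the union over $n$ stays countable.

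So the real content is that $\{\langle l,f\rangle\}$ separates points, i.e.\ determines the loop $l^{o}$. Taking $n=1$ and $f\equiv 1$ recovers the duration $t=|l|$. Unwinding the definition, extend $l$ to a $t$-periodic càdlàg function on $\mathbb{R}$ and let $D_{n}^{+}\subseteq[0,t)^{n}$ be the set of pairwise distinct $n$-tuples occurring in positive cyclic order on the circle $[0,t)$; one checks $\langle l,h\rangle=\int_{D_{n}^{+}}h(l(p_{1}),\dots,l(p_{n}))\,dp_{1}\cdots dp_{n}$, and, parametrising $D_{n}^{+}$ by a base point $w\in[0,t)$ and gaps $g_{1},\dots,g_{n-1}>0$ with $\sum_{i}g_{i}<t$ (a measure-preserving chart),
\[
\langle l,h\rangle=\int_{0}^{t}\!\int_{\{g_{i}>0,\ \sum g_{i}<t\}}h\bigl(l(w),\,l(w+g_{1}),\dots,l(w+g_{1}+\cdots+g_{n-1})\bigr)\,dg_{1}\cdots dg_{n-1}\,dw.
\]
Let $\mathbb{P}_{l}$ be the law in $D_{S}([0,t])$ of the based loop obtained from $l$ by a uniform random circular translation; one checks $\mathbb{P}_{l_{1}}=\mathbb{P}_{l_{2}}$ iff $l_{1}^{o}=l_{2}^{o}$, the nontrivial implication being a support argument, the orbit $\{l(\cdot+u):u\in[0,t)\}$ being compact as a continuous image of a circle. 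Hence it suffices to recover $\mathbb{P}_{l}$ from the multi-occupation data.

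To do this I would deconvolve the displayed gap integral. Given exponents $k_{1},\dots,k_{n-1}\ge 0$, inserting $k_{i}$ dummy arguments into the $i$-th gap of $h$ and integrating them out shows that $\int_{0}^{t}\int h(l(w),l(w+g_{1}),\dots)\prod_{i}g_{i}^{k_{i}}\,d\vec g\,dw$ equals an explicit constant times $\langle l,\tilde h\rangle$ for a suitable $\tilde h$ on $S^{n+\sum k_{i}}$, hence is known. Since the monomials $\prod_{i}g_{i}^{k_{i}}$ are dense among the continuous functions on the closed simplex $\{g_{i}\ge 0,\sum g_{i}\le t\}$ (Stone--Weierstrass), the finite, absolutely continuous signed measure $M_{\vec u}(h)\,d\vec u$ on $\{0<u_{1}<\cdots<u_{n-1}<t\}$ is determined, where $u_{k}=g_{1}+\cdots+g_{k}$ and $M_{\vec u}(h):=\int_{0}^{t}h\bigl(l(w),l(w+u_{1}),\dots,l(w+u_{n-1})\bigr)\,dw$; hence so is $M_{\vec u}(h)$ for a.e.\ $\vec u$ and every bounded Borel $h$. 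For bounded continuous $h$, $\vec u\mapsto M_{\vec u}(h)$ is right-continuous in each coordinate (dominated convergence and right-continuity of $l$), so ``a.e.\ $\vec u$'' upgrades to ``every $\vec u$''. But $\tfrac1t M_{\vec u}(h)$ is precisely the finite-dimensional distribution of $\mathbb{P}_{l}$ at the times $(0,u_{1},\dots,u_{n-1})$; since $\mathbb{P}_{l}$ is invariant under circular translation and $\omega(t)=\omega(0)$ $\mathbb{P}_{l}$-a.s., all finite-dimensional distributions of $\mathbb{P}_{l}$ are recovered, and these determine $\mathbb{P}_{l}$ because the Borel $\sigma$-field of $D_{S}([0,t])$ is generated by the coordinate maps. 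Thus $l^{o}$ is determined, and Blackwell's theorem finishes the proof.

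The principal obstacle is this deconvolution: a single functional $\langle l,f\rangle$ only feels the gaps between successively visited times after averaging over all gap configurations, so one genuinely has to invert — here via the moment trick of adjoining dummy coordinates together with Stone--Weierstrass — in order to reach the finite-dimensional distributions of the randomly rotated loop. A secondary delicate point is that degenerate loops (constant loops, or loops invariant under a nontrivial rotation) have no canonical base point; routing the argument through $\mathbb{P}_{l}$ sidesteps this entirely, so no case distinction is needed, and only standard facts — the Borel $\sigma$-field of Skorokhod space is generated by the projections, finite-dimensional distributions are determining, and Blackwell's theorem for Polish spaces — have to be cited.
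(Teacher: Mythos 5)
Your reduction to Blackwell's theorem is exactly the paper's (Lemma \ref{lem:a corollary of the theorem of Blackwell}), and your countable-generation remark matches the paper's use of a countable topological basis; but the core of your argument --- showing that the multi-occupation field separates loops --- takes a genuinely different route. The paper discretizes \emph{space}: it chooses finitely many small open sets $U^{\epsilon}_i$ with $m$-negligible boundaries, time-changes each loop onto its occupation of $\bigcup_i U^{\epsilon}_i$, projects to a finite state space, invokes the already-known finite-state version of the theorem to identify the two discrete loops up to a rotation $T_2(\epsilon)$, and then passes to the limit $\epsilon\to 0$ along a subsequence where $T_2(\epsilon_k)$ converges. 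You instead work directly in the continuum: you unroll the cyclic sum into an integral over a base point and gaps, recover the gap-moments $\int M_{\vec g}(h)\prod_i g_i^{k_i}\,d\vec g$ by adjoining dummy coordinates (each gap contributing $g_i^{k_i}/k_i!$ --- I checked the unrolling and this constant, both are right), invert via Stone--Weierstrass to get $M_{\vec u}(h)$ for a.e.\ $\vec u$, upgrade to every $\vec u$ by right-continuity, and identify these as the finite-dimensional distributions of the uniformly rotated loop $\mathbb{P}_l$. Your route buys independence from the discrete-case result cited from \cite{loop} and avoids the somewhat delicate construction of a partition with $m$-negligible boundaries; the paper's route buys a softer limiting argument that never needs the moment inversion or the fact that the Borel-$\sigma$-field of $D_S([0,t])$ is generated by the coordinate maps.

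One step needs repair. You justify the implication $\mathbb{P}_{l_1}=\mathbb{P}_{l_2}\Rightarrow l_1^{o}=l_2^{o}$ by asserting that the orbit $\{\Theta_u l:u\in[0,t)\}$ is compact as a continuous image of a circle. For a loop with jumps this is false: $u\mapsto\Theta_u l$ is discontinuous at every jump time of $l$ (there $\Theta_u l$ even fails to be a based loop, since left-continuity at $t$ breaks), so the orbit is only the continuous image of the circle minus a countable set, and neither its compactness nor its closedness is automatic. The conclusion survives by a weaker argument: $l_1\in\mathrm{supp}(\mathbb{P}_{l_1})$ because $D(\Theta_h l_1,l_1)\to 0$ as $h\to 0$ (Lemma \ref{lem:continuous under circular translation}), so equality of supports only yields a sequence $u_n$ with $D(\Theta_{u_n}l_2,l_1)\to 0$, i.e.\ $D^{o}(l_1^{o},l_2^{o})=0$; you must then invoke the separation property of $D^{o}$ (the third bullet of Proposition \ref{well defined}, whose proof is precisely the extraction of a convergent subsequence of the $u_n$ plus a right-continuity argument) to conclude $l_1^{o}=l_2^{o}$. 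With that substitution your proof is complete.
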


\section{Proof of Theorem \ref{thm:main theorem}}
\label{sec:2}
We will prove the main theorem in this section as an application of the following Blackwell's theorem.

\begin{theorem}[Blackwell's theorem, Theorem 26, Chapter III of \cite{meyer}]
Suppose $(E,\mathcal{E})$ is a Blackwell space, $\mathcal{S},\mathcal{F}$ are sub-$\sigma$-field of $\mathcal{E}$ and $\mathcal{S}$ is separable. Then $\mathcal{F}\subset\mathcal{S}$ iff every atom of $\mathcal{F}$ is a union of atoms of $\mathcal{S}$.
\end{theorem}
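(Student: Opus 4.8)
The plan is to apply Blackwell's theorem with $E$ the loop space, $\mathcal{E}$ its Borel-$\sigma$-field, $\mathcal{F}=\mathcal{E}$, and $\mathcal{S}$ the sub-$\sigma$-field generated by the multi-occupation field, i.e. $\mathcal{S}=\sigma\big(\langle l,f\rangle : n\ge1,\ f \text{ bounded Borel on } S^n\big)$. By Proposition \ref{measurable} we already have $\mathcal{S}\subseteq\mathcal{E}$, so it suffices to prove $\mathcal{E}\subseteq\mathcal{S}$; together with the reverse inclusion this is exactly the assertion that $\mathcal{E}$ is generated by the multi-occupation field. I first verify the hypotheses. Since the loop space is Polish (Proposition \ref{Polish}) it is a Blackwell space, and the atoms of $\mathcal{E}$ are the singletons $\{l^o\}$. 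To see that $\mathcal{S}$ is separable, fix a countable algebra $\mathcal{A}$ generating the Borel sets of $S$ (available as $S$ is second countable) and let $\mathcal{S}_0$ be generated by the countably many functionals $\langle l,1_{A_1\times\cdots\times A_n}\rangle$ with $A_i\in\mathcal{A}$, $n\ge1$. Since $f\mapsto\langle l,f\rangle=\int f\,d\mathcal{O}_n^l$ is linear and closed under bounded monotone limits, a functional monotone class argument gives $\mathcal{S}_0=\mathcal{S}$, so $\mathcal{S}$ is countably generated, hence separable.

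With the hypotheses in place, Blackwell's theorem says $\mathcal{E}\subseteq\mathcal{S}$ holds iff every atom of $\mathcal{E}$ is a union of atoms of $\mathcal{S}$. Because $\mathcal{S}\subseteq\mathcal{E}$, each $\mathcal{S}$-atom contains the $\mathcal{E}$-atom through the same point, so a singleton $\{l^o\}$ can be a union of $\mathcal{S}$-atoms only if it \emph{is} an $\mathcal{S}$-atom. As $\mathcal{S}$ is generated by the multi-occupation field, the $\mathcal{S}$-atom through $l^o$ equals $\{(l')^o : \langle l',f\rangle=\langle l,f\rangle \text{ for all } n,f\}$. The entire theorem therefore reduces to the following separation statement, which is the real content.

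\emph{Separation Lemma.} If two based loops $l_1,l_2$ satisfy $\langle l_1,f\rangle=\langle l_2,f\rangle$ for every $n\ge1$ and every bounded Borel $f:S^n\to\mathbb{R}$, then $l_1^o=l_2^o$.

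To prove the lemma I would proceed as follows. Taking $n=1$, $f\equiv1$ recovers the common duration $|l_1|=|l_2|=t$. I then fix a refining sequence of finite Borel partitions $\mathcal{P}_1\prec\mathcal{P}_2\prec\cdots$ of $S$ with mesh tending to $0$ and generating the Borel sets (balls of radius $1/m$ about a countable dense set, disjointified). For each $m$, composing $l_i$ with the cell-index map turns it into a càdlàg path valued in finitely many cells; such a path is locally constant with only finitely many jumps on the circle, hence is encoded by a \emph{reduced cyclic word with durations} $W_i^m$ (a cyclic sequence of cell/holding-time pairs with cyclically distinct neighbouring cells), well-defined up to rotation. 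The key point is that the fields $\langle l,1_{B_{j_1}\times\cdots\times B_{j_n}}\rangle$ are precisely the cyclically-ordered occupation volumes of $W^m$, and that these determine $W^m$ up to rotation — the continuous-duration analogue of the discrete reconstruction in \cite{loop}. Since $l_1,l_2$ share all fields, $W_1^m$ and $W_2^m$ agree up to rotation for every $m$. Choosing base points compatibly across the refining partitions, the words $W_i^m$ converge to $l_i$ in the Skorokhod metric as $m\to\infty$, and the level-wise agreement passes to the limit to give $l_1^o=l_2^o$.

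I expect the main obstacle to be exactly this lemma, and within it two points: (i) showing that the cyclically-ordered occupation volumes genuinely reconstruct the reduced cyclic word together with its real-valued durations up to rotation — the combinatorial heart, upgrading \cite{loop} from counting to Lebesgue measure (note that $n=2$ only returns the \emph{product} of occupation times, $\langle l,\phi_1\otimes\phi_2\rangle=(\int\phi_1\circ l)(\int\phi_2\circ l)$, so all ordering information must be extracted from $n\ge3$); and (ii) making the reconstruction rotation-invariant consistently across refining partitions, so that the approximating words can be aligned and their Skorokhod limit taken without the undetermined base points obstructing convergence.
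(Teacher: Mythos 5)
You were asked about Blackwell's theorem itself, which the paper — like you — imports from Meyer's book \cite{meyer} without proof; your text does not prove that statement but applies it, so what can actually be assessed is your sketch of the paper's main theorem. Up to the reduction, your skeleton coincides with the paper's: the passage to the countable family $\{\langle\cdot,1_{B}\rangle:B\in\bigcup_k\mathfrak{B}^k\}$ (your monotone-class step proving $\mathcal{S}_0=\mathcal{S}$ is the paper's footnote on countability), the singleton/atom reformulation (the paper's Lemma \ref{lem:a corollary of the theorem of Blackwell}), and the reduction to your Separation Lemma, which is indeed where all the content lies. Your $n=2$ remark, that $\langle l,\phi_1\otimes\phi_2\rangle=(\int\phi_1\circ l)(\int\phi_2\circ l)$ carries no ordering information, is correct and a good sanity check. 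The divergence, and the gaps, are inside the Separation Lemma.

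Two steps of your route fail as stated. First, ``finite Borel partitions of $S$ with mesh tending to $0$'' do not exist unless $S$ is totally bounded; in a general Polish space you must first use regularity of the occupation measure $m$ to find a compact $K_\epsilon$ with $m(K_\epsilon^c)<\epsilon$ and cover only $K_\epsilon$ by finitely many small balls with $m$-negligible boundaries — this is the paper's Step I, and it forces the encoding to tolerate a leftover region of small $m$-mass that your cyclic words do not account for. Second, and more seriously, the cell-index path need not be a finite cyclic word: a c\`{a}dl\`{a}g loop can cross the common boundary of two adjacent cells infinitely often in finite time (e.g.\ a continuous path oscillating across a boundary with damped amplitude), so ``locally constant with finitely many jumps'' is false and $W_i^m$ is simply not defined. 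The paper's fix is twofold: shrink the cells to open sets $U^{\epsilon}_i$ at \emph{positive distance} from one another, so that c\`{a}dl\`{a}g-ness (existence of left limits) forbids infinitely many transitions, and time-change by the right-continuous inverse $\sigma^{\epsilon}$ of $A^{\epsilon}_u=\int_0^u 1_{\{l(s)\in U^{\epsilon}\}}ds$, i.e.\ take the trace on $U^{\epsilon}$; the $m$-negligible boundaries then give the exact identity between the discrete multi-occupation times and $\langle l,1_{U^{\epsilon}_{i_1}}\cdots 1_{U^{\epsilon}_{i_n}}\rangle$. Without the time change, the ``holding times'' in a cell are not intervals at all.

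Your obstacle (ii) — choosing base points compatibly across refinement levels and passing to a Skorokhod limit — is real, and the paper \emph{avoids} it rather than solving it: for each $\epsilon$ the finite-state result of \cite{loop} (which applies verbatim to the time-changed loops, so no ``upgrade from counting to Lebesgue measure'' needs proving) yields a rotation $T_2(\epsilon)\in[0,t]$ with $l^{\epsilon}_2(\cdot+T_2)=l^{\epsilon}_1(\cdot)$; compactness of $[0,t]$ gives a subsequence $T_2(\epsilon_k)\rightarrow T$, and one then concludes \emph{pointwise}, not in Skorokhod distance: combining $\sup_s d_S(l^{\epsilon}_j(s),l_j(\sigma^{\epsilon}_{j,s}))\leq\epsilon$ with the uniform convergence $\sigma^{\epsilon}_{j,s}\downarrow s$ and right-continuity yields, for every $s$, that $l_1(s)$ equals $l_2(s+T)$ or $l_2((s+T)-)$, and right-continuity in $s$ upgrades this to $l_1(s)=l_2(s+T)$ outright. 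So no coherent alignment of words across levels is ever needed; replacing your limit step by this compactness-plus-right-continuity argument, and your partitions by the paper's separated, boundary-negligible $U^{\epsilon}_i$ with the trace time change, is exactly what it takes to close the gaps.
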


As a consequence, we have the following lemma.

\begin{lemma}\label{lem:a corollary of the theorem of Blackwell}
Suppose $(E,\mathcal{B}(E))$ is a Polish space with the Borel-$\sigma$-field. Let $\{f_i,i\in\mathbb{N}\}$ be measurable functions and denote $\mathcal{F}=\sigma(f_i,i\in\mathbb{N})$. Then, $\mathcal{F}=\mathcal{B}(E)$ iff for all $x\neq y\in E$, there exists $f_i$ such that $f_i(x)\neq f_i(y)$.
\end{lemma}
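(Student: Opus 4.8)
The plan is to read the lemma off Blackwell's theorem with a convenient choice of the two sub-$\sigma$-fields: take Blackwell's $\mathcal{S}$ to be $\mathcal{F}=\sigma(f_i,i\in\mathbb{N})$ and Blackwell's ``$\mathcal{F}$'' to be $\mathcal{B}(E)$. First I would record the standing facts that make the theorem applicable. A Polish space with its Borel $\sigma$-field is a Blackwell space, and since $E$ is separable metric its Borel $\sigma$-field is countably generated (by a countable base of open balls). Likewise $\mathcal{F}$ is separable: it is generated by the countable family $\{f_i^{-1}(B_k):i\in\mathbb{N},\ k\in\mathbb{N}\}$, where $(B_k)_{k}$ is a countable generating class of $\mathcal{B}(\mathbb{R})$. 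Finally, measurability of the $f_i$ gives $\mathcal{F}\subset\mathcal{B}(E)$ for free, so the real content is the equivalence of $\mathcal{B}(E)\subset\mathcal{F}$ with the point-separation property.

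Next I would compute the atoms of the two $\sigma$-fields. Because $E$ is separable metric, the atom of $\mathcal{B}(E)$ through a point $x$ is the singleton $\{x\}$ (any $y\neq x$ is excluded by a suitable basic open set, and singletons are themselves Borel). The atom of $\mathcal{F}$ through $x$ is $A(x):=\{y\in E:f_i(y)=f_i(x)\text{ for all }i\in\mathbb{N}\}=\bigcap_{i\in\mathbb{N}}f_i^{-1}(\{f_i(x)\})$; this is the intersection of all sets in $\mathcal{F}$ containing $x$, using that $\mathcal{F}=\sigma(f_i,i\in\mathbb{N})$ and that each $f_i^{-1}(\{f_i(x)\})$ already lies in $\mathcal{F}$, as $\{f_i(x)\}$ is a Borel subset of $\mathbb{R}$. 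The crucial remark is that the hypothesis ``for all $x\neq y$ there is $f_i$ with $f_i(x)\neq f_i(y)$'' says exactly that $A(x)=\{x\}$ for every $x$, i.e. that $\mathcal{F}$ and $\mathcal{B}(E)$ have the same atoms, namely all singletons.

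Then the equivalence follows in both directions. If the separation property holds, every atom of $\mathcal{B}(E)$ is a singleton $\{x\}=A(x)$, hence (trivially) a union of atoms of $\mathcal{F}$; Blackwell's theorem yields $\mathcal{B}(E)\subset\mathcal{F}$, and combined with $\mathcal{F}\subset\mathcal{B}(E)$ this gives $\mathcal{F}=\mathcal{B}(E)$. Conversely, if $\mathcal{F}=\mathcal{B}(E)$ and $x\neq y$, then $\{x\}\in\mathcal{B}(E)=\mathcal{F}$ is a set in $\mathcal{F}$ containing $x$ but not $y$, so $y\notin A(x)$ and some $f_i$ separates $x$ from $y$. (Equivalently, one can just invoke the atom computation: $\mathcal{F}=\mathcal{B}(E)$ forces $A(x)=\{x\}$.)

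I do not expect a genuine obstacle here; the proof is essentially a translation exercise. The points that need care are the bookkeeping ones: verifying that $\mathcal{F}$ is countably generated so that Blackwell's separability hypothesis is met, that $(E,\mathcal{B}(E))$ indeed qualifies as a Blackwell space, and that the atoms are as described — in particular the small observation that $\{f_i(x)\}$ is Borel in $\mathbb{R}$, which is what legitimizes the formula $A(x)=\bigcap_i f_i^{-1}(\{f_i(x)\})$ and the identification of the separation property with ``$A(x)=\{x\}$ for all $x$''.
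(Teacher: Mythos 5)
Your proposal is correct and follows essentially the same route as the paper: apply Blackwell's theorem with $\mathcal{S}=\mathcal{F}$ and the other sub-$\sigma$-field equal to $\mathcal{B}(E)$, note that the atoms of $\mathcal{B}(E)$ are singletons, and identify the point-separation property with the statement that the atoms of $\mathcal{F}$ are singletons. You simply spell out a few details the paper leaves implicit (the explicit formula $A(x)=\bigcap_i f_i^{-1}(\{f_i(x)\})$ and the countable generation of $\mathcal{F}$).
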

\begin{proof}
Since $E$ is Polish, $\mathcal{B}(E)$ is separable and $(E,\mathcal{B}(E))$ is Blackwell space. The atoms of $\mathcal{B}(E)$ are all the one point sets. Obviously, $\mathcal{F}\subset\mathcal{B}(E)$ and $\mathcal{F}$ is separable. By Blackwell's theorem, $\mathcal{F}=\mathcal{B}(E)$ iff. the atoms of $\mathcal{F}$ are all the one point sets which is equivalent to the following: for all $x\neq y\in E$, there exists $f_i$ such that $f_i(x)\neq f_i(y)$.
\end{proof}

Then, we are ready for the proof of the main theorem.

\bigskip

From the definition of the multi-occupation field, any loop defines a finite measure on $S^n$ for all $n\in\mathbb{N}_{+}$. Let $\mathfrak{B}=(B_i,i\in\mathbb{N})$ be a countable topological basis of $S$. The $\sigma-$field generated by the multi-occupation field must equal to the $\sigma-$field generated by the following countable\footnote{The countability is required by Lemma \ref{lem:a corollary of the theorem of Blackwell}.} functionals $\{\langle \cdot,1_{B}\rangle:B\in\bigcup\limits_{k=1}^{\infty}\mathfrak{B}^k\}$. In fact, if two loop $l_1$ and $l_2$ are the same under these countable loop functionals $\{\langle \cdot,1_{B}\rangle:B\in\bigcup\limits_{k=1}^{\infty}\mathfrak{B}^k\}$, they must agree on all the functionals of the form $\langle \cdot,f\rangle$. By Lemma \ref{lem:a corollary of the theorem of Blackwell}, it remains to check that two loops with the same occupation field are the same loop.

\bigskip

Suppose loops $l_1^o$ and $l_2^o$ have the same occupation field, i.e. $\langle l_1^o,f\rangle=\langle l_2^o,f\rangle$ for all positive $f$ on some $S^n$ $(n\in\mathbb{N}_{+})$. Recall that a loop is an equivalence class of based loops. Take two based loops $l_1,l_2$ in the equivalence class $l_1^o,l_2^o$ respectively. Then, $\langle l_1,f\rangle=\langle l_1^o,f\rangle=\langle l_2^o,f\rangle=\langle l_2,f\rangle$. Define $m_1(A)=\langle l_1,1_{A}\rangle$ and $m_2(A)=\langle l_2,1_{A}\rangle$ for $A\in\mathcal{B}(S)$. Then, we have $m_1=m_2$ which means that the time spent in some Borel measurable set is the same for these two loops. In particular, the two (based) loops have the same time duration, say $t$. For simplicity of the notations, we will use $m$ instead of $m_1$ and $m_2$. Now, we are ready to show that $l_1^{o}=l_2^{o}$ in three steps. Let us present the sketch of the proof before providing the details. We first decompose the space into an approximate partition which is used in \cite{QianMR3101845}. Next, we replace arcs of trajectory in each part by a single point with corresponding holding times. In this way, we get two loops in the same discrete space. By the construction of these discrete loops, their multi-occupation fields coincide. It is known that Theorem \ref{thm:main theorem} is true for loops in discrete space. Thus, these two modified loops in the discrete space are exactly the same. Moreover, when the rough partition is small enough, these modified loops are actually good approximation of the original loops $l_1^{o}$ and $l_2^{o}$ in the sense of Skorokhod. For that reason, we conclude in the last step that $l_1^{o}=l_2^{o}$.

\bigskip

\begin{itemize}
\item[I.]For all $\epsilon>0$ fixed, we choose a collection of open sets $U^{\epsilon}_i$ satisfying the following properties:
\begin{itemize}
\item their boundaries are negligible with respect to $m$,
\item they have positive distances from each other,
\item the complement of their union has mass smaller than $2\epsilon$ with respect to $m$,
\item their diameters are smaller than $\epsilon$.
\end{itemize}
Let $U^{\epsilon}$ be the union of $(U^{\epsilon}_i)_i$.\\


Actually, the rough partitions $(U^{\epsilon}_i)_i$ are chosen in the following way. It is well-known that every finite measure on the Borel-$\sigma$-field of a Polish space is regular. Therefore, for $\epsilon>0$, we can find some compact set $K_{\epsilon}$ such that $m(K_{\epsilon}^c)<\epsilon$ where $K_{\epsilon}^c$ is the complement of $K_{\epsilon}$. Let $\mathcal{D}=\{x_1,\cdots,x_n,\cdots\}$ be a countable dense subset of $S$. Fix any $x\in S$, except for countable many $r\in\mathbb{R}_{+}$, the measure $m$ does not charge the boundary $\partial(B(x,r))$ of the ball $B(x,r)$. Then, for any $\epsilon>0$, there exists a collection of open ball $B(x_i,r_i)$ with radius smaller than $\epsilon$ such that their boundaries are negligible with respect to $m$. Then, they cover the compact set $K_{\epsilon}$ as $\mathcal{D}$ is dense in $S$. Therefore, we can extract a finite open covering $\{B(y_1,r_1),\cdots,B(y_{k},r_k)\}$. These open balls cut the whole space $S$ into a finite partition of the space $S\setminus\bigcup\limits_{i}\partial(B(y_i,r_i))$: $P_0,\cdots,P_{q}$ open set with $P_0=(\bigcup\limits_{i} \overline{B(y_i,r_i)})^c$. Let $U^{\epsilon}_{i,\delta}=\{y\in S:d_S(y,P_i^c)>\delta\}$ which is contained in $P_i$. In fact, one can always choose some $\delta_0$ small enough and good enough such that the boundary sets $\{y\in S:d_S(y,P_i^c)=\delta_0\}$ of these open sets are negligible under $m$. Moreover, $m(S\setminus(\bigcup\limits_{i}U^{\epsilon}_{i,\delta_0}\cap K_{\epsilon}))<2\epsilon$. Set $U^{\epsilon}_{i}=U^{\epsilon}_{i,\delta_0},i=1,\cdots,q$. Then, they satisfy the desired properties stated above.

\item[II.]From the based loop $l_j$ $(j=1,2)$, we will construct two piecewise-constant based loops $l^{\epsilon}_{j}$ $(j=1,2)$ with finitely many jumps such that $l^{\epsilon}_{1}$ and $l^{\epsilon}_{2}$ are the same in the sense of loop and that they are quite close to the trace of $l_1$ and $l_2$ on $U^{\epsilon}$ respectively.\\

To be more precise, define $A^{\epsilon}_{j,u}=\int\limits_{0}^{u}1_{\{l_j(s)\in U^{\epsilon}\}}\,ds$ with the convention that $l_j(s+kt)=l_j(s)$ for $s\in[0,t]$ and $k\in\mathbb{Z}$ where $t$ is the time duration of the based loops. Then, $(A^{\epsilon}_{j,u},u\in\mathbb{R}_{+})$ is right-continuous and increasing for $j=1,2$. Let $(\sigma^{\epsilon}_{j,s},s\in\mathbb{R}_{+})$ be the right-continuous inverse of $(A^{\epsilon}_{j,u},u\in\mathbb{R}_{+})$ for $j=1,2$ respectively. To be more precise, $$\sigma^{\epsilon}_{j,s}=\inf\{s\in\mathbb{R}_{+}:A^{\epsilon}_{j,u}>s\}.$$
Let $t_{\epsilon}=A^{\epsilon}_{1,t}=A^{\epsilon}_{2,t}=m(U^{\epsilon})$ to be the total occupation time of the loops within $U^{\epsilon}$. Then, $A^{\epsilon}_{j,u+kt}=kt_{\epsilon}+A^{\epsilon}_{j,u}$ and $A^{\epsilon}_{j,u}\leq u$ for $u\in\mathbb{R}_{+}$. Thus, $\sigma^{\epsilon}_{j,s+kt_{\epsilon}}=\sigma^{\epsilon}_{j,s}+kt$ for $k\in\mathbb{N},s\in[0,t_{\epsilon}[$ and $\sigma^{\epsilon}_{j,s}\geq s$ for $s\in\mathbb{R}_{+}$. Moreover, as $\epsilon\downarrow 0$, $(\sigma^{\epsilon}_{j,s},s\in\mathbb{R}_{+})$ decreases to $(s,s\in\mathbb{R}_{+})$ uniformly on any compact of $\mathbb{R}_{+}$. We know that $l_j(\sigma^{\epsilon}_{j,s})\in\bigcup\limits_{i}\overline{U^{\epsilon}_i}$. We choose in each $U^{\epsilon}_i$ a point $y_i$ and define $l^{\epsilon}_{j}(s)=y_i$ iff. $l_j(\sigma^{\epsilon}_{j,s})\in \overline{U^{\epsilon}_i}$ for $j=1,2$. Then, as the diameters of $(U^{\epsilon}_i)_i$ are less than $\epsilon$, $\sup\limits_{s}d_S(l^{\epsilon}_{j}(s),l_j(\sigma^{\epsilon}_{j,s}))\leq\epsilon$ for $j=1,2$. Moreover, $s\rightarrow l^{\epsilon}_{j}(s)$ is c\`{a}dl\`{a}g for $j=1,2$. Since the based loops $l_1$ and $l_2$ are c\`{a}dl\`{a}g and all the $\overline{U^{\epsilon}_{i}}$ have a positive distance from each other, $s\rightarrow l^{\epsilon}_{j}(s)$ has finitely many jumps in any finite time interval for $j=1,2$. Then, $(l^{\epsilon}_{j}(s),s\in[0,t_{\epsilon}])^{o}$ is a loop on the same finite state space for $j=1,2$ respectively. Since the boundary of $U^{\epsilon}_i$ in negligible with respect to $m$, by Lebesgue's change of measure formula,
$$((l^{\epsilon}_{j}(s),s\in[0,t_{\epsilon}])^{o})^{y_{i_1},\cdots,y_{i_n}}=\langle l_j,1_{U^{\epsilon}_{i_1}}\cdots 1_{U^{\epsilon}_{i_n}} \rangle\text{ for }j=1,2.$$
Therefore, $(l^{\epsilon}_{1}(s),s\in[0,t_{\epsilon}])^{o}$ and $(l^{\epsilon}_{2}(s),s\in[0,t_{\epsilon}])^{o}$ have the same multi-occupation field. It is known that Theorem \ref{thm:main theorem} is true for loops in finite discrete space, see the paragraph below Proposition 10 in Chapter 2 of \cite{loop}. Thus, 
$$(l^{\epsilon}_{1}(s),s\in[0,t_{\epsilon}])^{o}=(l^{\epsilon}_{2}(s),s\in[0,t_{\epsilon}])^{o}.$$
Consequently, there exists some $T_2(\epsilon)\in[0,t_{\epsilon}[$ such that $l^{\epsilon}_{2}(s+T_2)=l^{\epsilon}_{1}(s)$ for $s\geq 0$.

\item[III.] Take the limit on a subsequence and use the right-continuity of the path to conclude $l_1=l_2$ up to circular translation.\\
We can find a sequence $(\epsilon_k)_{k}$ with $\lim\limits_{k\rightarrow\infty}\epsilon_k=0$ such that $T_2(\epsilon_k)$ converges to $T\in[0,t]$ as $k\rightarrow\infty$. Then, $\lim\limits_{k\rightarrow\infty}\sigma^{\epsilon_k}_{2,s+T_2}=s+T$ for fixed $s\geq 0$. Accordingly, 
\begin{equation}\label{eq:2.3.1}
\lim\limits_{k\rightarrow\infty}\min\{d_S(l_2(\sigma^{\epsilon_k}_{2,s+T_2}),l_2(s+T)),d_S(l_2(\sigma^{\epsilon_k}_{2,s+T_2}),l_2((s+T)-))\}=0.\end{equation}
On the other hand, we have $\lim\limits_{k\rightarrow\infty}\sigma^{\epsilon_k}_{2,s}=s$ and $\sigma^{\epsilon_k}_{2,s}\geq s$ for all $k\in\mathbb{N}$. Therefore, by the right continuity of $l_1$,
\begin{equation}\label{eq:2.3.2}
\lim\limits_{k\rightarrow\infty}d_S(l_1(\sigma^{\epsilon_k}_{1,s}),l_1(s))=d_S(l_1(s+),l_1(s))=0.\end{equation}
From the constructions of $l_1^{\epsilon}$ and $l_2^{\epsilon}$ and the argument in part II, we see that
\begin{align}\label{eq:2.3.3}
\sup\limits_{s}d_S(l_2(\sigma^{\epsilon_k}_{2,s+T_2}),l_1(\sigma^{\epsilon_k}_{1,s}))\leq & \sup\limits_{s}d_S(l^{\epsilon}_2(s+T_2),l_2(\sigma^{\epsilon_k}_{2,s+T_2}))\nonumber\\
&+\sup\limits_{s}d_S(l^{\epsilon}_1(s),l_1(\sigma^{\epsilon_k}_{1,s}))\nonumber\\
\leq &2\epsilon_k.
\end{align}
As a result, by \eqref{eq:2.3.1}$+$\eqref{eq:2.3.2}$+$\eqref{eq:2.3.3}, for any $s\geq 0$, either $d_S(l_2(s+T),l_1(s))=0$ or $d_S(l_2((s+T)-),l_1(s))=0$. Finally, by right-continuity of the paths $l_1$ and $l_2$,
$$l_2(s+T)=l_1(s)$$ and the proof is complete.
\end{itemize}

\section*{Appendix}
\addcontentsline{toc}{section}{Appendix}
As promised, we give the proofs for Proposition \ref{well defined}, \ref{Polish} and \ref{measurable} in this section. For that reason, we prepare several notations and lemmas in the following.

\bigskip

\begin{definition}
Suppose $\lambda:[0,1]\rightarrow [0,1]$ is a increasing bijection. For $t\in[0,1[$, define $$\theta_t\lambda(s)=\left\{
\begin{array}{ll}
 \lambda(t+s)-\lambda(t) & \text{ for } s\in[0,1-t]\\
 1-\lambda(t)+\lambda(t+s-1) & \text{ for }s\in[1-t,1].
\end{array}
\right.$$
\end{definition}
In fact, we cut the graph of $\lambda$ at the time $t$, exchange the first part of the graph with the second part and then glue them together to get an increasing bijection over $[0,1]$.
\begin{lemma}\label{lem:invariant under regathering}
 $$\sup\limits_{s<t}\left|\log\frac{\theta_r\lambda(t)-\theta_r\lambda(s)}{t-s}\right|=\sup\limits_{s<t}\left|\log\frac{\lambda(t)-\lambda(s)}{t-s}\right|.$$
\end{lemma}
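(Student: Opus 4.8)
The plan is to show the two suprema are equal by exhibiting a bijection between the set of "chords" $(s,t)$ with $s<t$ used on the left-hand side and a corresponding set of chords on the right-hand side that produces the same slope ratios. Recall that $\theta_r\lambda$ is obtained by cutting the graph of $\lambda$ at time $r$, swapping the two pieces, and re-gluing; intuitively this is a "circular translation" of the increments of $\lambda$, so the multiset of increment ratios $\frac{\lambda(t)-\lambda(s)}{t-s}$ over all chords should be unchanged. First I would observe that for any $0\le s<t\le 1$ the quantity $\theta_r\lambda(t)-\theta_r\lambda(s)$ can be written piecewise according to whether $s,t$ both lie in $[0,1-r]$, both lie in $[1-r,1]$, or straddle the cut point $1-r$. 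In the first two cases one gets directly $\lambda(t')-\lambda(s')$ where $(s',t')$ is the chord $(s,t)$ shifted by $r$ or by $r-1$ (mod the gluing), and $t'-s'=t-s$, so the ratio is literally one of the ratios appearing on the right.

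The only genuinely new case is the straddling one: $s\in[0,1-r]$, $t\in[1-r,1]$. Here $\theta_r\lambda(t)-\theta_r\lambda(s) = \big(1-\lambda(r)+\lambda(r+t-1)\big) - \big(\lambda(r+s)-\lambda(r)\big) = \big(1-\lambda(r+s)\big) + \lambda(r+t-1) = \big(\lambda(1)-\lambda(r+s)\big) + \big(\lambda(r+t-1)-\lambda(0)\big)$, which is the sum of two increments of $\lambda$ over the two intervals $[r+s,1]$ and $[0,r+t-1]$, whose lengths add up to exactly $(1-r-s)+(r+t-1) = t-s$. So the straddling chord of $\theta_r\lambda$ corresponds to a "wrapped" chord of $\lambda$ of total length $t-s$; since the ratio of a sum of increments to the sum of the corresponding lengths is a weighted average of the two individual ratios, it is squeezed between them, and hence $|\log(\cdot)|$ of it is bounded by the max of the two individual $|\log(\cdot)|$ values. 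This shows $\sup$ for $\theta_r\lambda \le \sup$ for $\lambda$.

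For the reverse inequality I would use the fact that $\theta_r$ is invertible on the space of increasing bijections: applying $\theta_{\lambda(r)}$-type surgery (more precisely, cutting $\theta_r\lambda$ at the appropriate point $1-r$ in the domain) recovers $\lambda$, so by the inequality just proved applied to $\theta_r\lambda$ in place of $\lambda$ we get $\sup$ for $\lambda \le \sup$ for $\theta_r\lambda$. Alternatively, and perhaps more cleanly, one can note that the supremum over all chords equals the supremum over a dense enough family and that both $\sup$'s are in fact equal to $\sup\{|\log \lambda'|\}$-type quantities when $\lambda$ is nice, then pass to the limit; but the invertibility argument avoids regularity assumptions and is the route I would take. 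The main obstacle is purely bookkeeping: carefully handling the straddling chords and the endpoints $s=1-r$, $t=1-r$, and making the "weighted average $\Rightarrow$ squeezed between" step precise (the elementary inequality that if $a,b>0$ then $\frac{a+b}{a'+b'}$ lies between $\frac{a}{a'}$ and $\frac{b}{b'}$, hence $\big|\log\frac{a+b}{a'+b'}\big| \le \max\big(\big|\log\frac{a}{a'}\big|,\big|\log\frac{b}{b'}\big|\big)$). No deep idea is needed beyond recognizing that $\theta_r$ permutes the increments of $\lambda$ cyclically.
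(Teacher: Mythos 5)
Your proof is correct and rests on the same elementary inequality as the paper's, namely that for $a<b<c$ one has $\bigl|\log\frac{\lambda(c)-\lambda(a)}{c-a}\bigr|\le\max\Bigl(\bigl|\log\frac{\lambda(b)-\lambda(a)}{b-a}\bigr|,\bigl|\log\frac{\lambda(c)-\lambda(b)}{c-b}\bigr|\Bigr)$ --- this is exactly your ``weighted average $\Rightarrow$ squeezed between'' (mediant) step. The organization differs: the paper applies this inequality once to conclude that the supremum is unchanged when restricted to chords with $t-s$ small, and then declares the invariance under $\theta_r$ to follow because such a localized quantity does not see the cut-and-swap; you instead run an explicit case analysis on all chords, apply the mediant inequality only to the chords straddling the cut point $1-r$, and obtain the reverse inequality from the identity $\theta_{1-r}(\theta_r\lambda)=\lambda$. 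Your version is the more complete one as written: the paper's localization still leaves implicit how the small chords straddling the cut are handled (they must again be split at the cut and the same inequality applied), and it never spells out why the inequality holds in both directions. Your bookkeeping --- in particular the computation $\theta_r\lambda(t)-\theta_r\lambda(s)=(\lambda(1)-\lambda(r+s))+(\lambda(r+t-1)-\lambda(0))$ with the two interval lengths summing to $t-s$, and the explicit inverse $\theta_{1-r}$ --- is precisely what is needed to make the paper's one-line conclusion rigorous, so there is no gap.
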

\begin{proof}
 Denote by $\phi(\lambda,s,t)$ the quantity $|\log\frac{\lambda(t)-\lambda(s)}{t-s}|$. We see that
$$\max(\phi(\lambda,a,b),\phi(\lambda,b,c))\geq \phi(\lambda,a,c).$$
Thus, $\sup\limits_{s<t}\phi(\lambda,s,t)=\sup\limits_{s<t,t-s\text{ is small}}\phi(\lambda,s,t)$. As a result, $\sup\limits_{s<t}|\log\frac{\lambda(t)-\lambda(s)}{t-s}|$ is a function of $\lambda$ which is invariant under $\theta_t$.
\end{proof}
\begin{definition}
For a based loop $l$ of time duration $t$ and $r\in[0,t[$, denote by $\Theta_r$ the circular translation of $l$:
$$\Theta_r(l)(u)=\left\{
\begin{array}{ll}
l(u+r) & \text{ for }u\in[0,t-r]\\
l(u+r-t) & \text{ for }u\in[t-r,t].                         
\end{array}\right.$$
Then, we can extend $\Theta_r$ for all $r\in\mathbb{R}$ by periodical extension.
\end{definition}
Notice that $\Theta_r(l)$ is a based loop iff. the periodical extension of $l$ is continuous at time $r$. Nevertheless, we define the distance $D(\Theta_r l,l)$ in the same way. The next lemma shows the continuity of $r\rightarrow \Theta_rl$ at time $r$ when the based loop $l$ is continuous at $r$. 
\begin{lemma}\label{lem:continuous under circular translation}
 Suppose $l$ is a based loop. Then, $\lim\limits_{h\rightarrow 0}D(\Theta_h l,l)=0$.
\end{lemma}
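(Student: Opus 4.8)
The plan is to produce, for each small $h$, an explicit reparametrization $\lambda_h$ realizing the normalized Skorokhod distance between $\Theta_h l$ and $l$, and to check that both terms in the Skorokhod metric \eqref{eq:intro.1} go to $0$. First I would dispose of the duration term: since $\Theta_h l$ has the same time duration $t$ as $l$, the contribution $\big||\Theta_h l|-|l|\big|$ to $D$ vanishes identically, so only the normalized part $d(\Theta_h l^{\text{normalized}}, l^{\text{normalized}})$ needs attention. After rescaling time to $[0,1]$ we may as well assume $t=1$, so that the question becomes: show $d(\Theta_h l, l)\to 0$ in the metric $d$ on $D_S([0,1])$, where $\Theta_h$ now denotes circular translation by $h$ on $[0,1]$.

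Next I would build the competitor $\lambda_h$. The naive choice is the circular shift ``$s\mapsto s-h$'' wrapped around, but that is not an increasing bijection of $[0,1]$ onto itself; the fix is exactly the operation $\theta_t$ from the preceding definition applied to the identity, i.e. take $\lambda_h=\theta_{1-h}(\mathrm{id})$, which is the increasing bijection sending $[0,h]$ linearly onto $[1-h,1]$ and $[h,1]$ linearly onto $[0,1-h]$ by translation. For this $\lambda_h$ one checks directly that $l(\lambda_h(u))$ equals $\Theta_h l(u)$ except on the single interval $u\in[0,h]$, where $\lambda_h(u)=1-h+u$ wraps to the ``top'' of the loop; there, using $l(1)=l(0)$ and right-continuity of $l$ at $0$, the discrepancy $d_S(\Theta_h l(\lambda_h^{-1}(\cdot)),\dots)$ is controlled by the oscillation of $l$ near the endpoints, which is small for small $h$. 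The slope term is handled by Lemma~\ref{lem:invariant under regathering}: $\sup_{s<t}\big|\log\frac{\lambda_h(t)-\lambda_h(s)}{t-s}\big|$ is invariant under $\theta$, and for the identity it is $0$, so this whole term is $0$ for every $h$. Hence $d(\Theta_h l,l)\le \sup_{u}d_S(\Theta_h l(u), l(\lambda_h(u)))$ plus a correction supported near $\{0\}\cup\{1\}$.

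The remaining work is to bound that supremum. Away from the wrap-around the integrand is $0$ by construction, so it suffices to bound the oscillation of $l$ on $[1-h,1]$ and on $[0,h]$ and near the gluing point $h$. Here I would invoke that a càdlàg path on a compact interval has, for each $\eta>0$, only finitely many jumps of size $>\eta$ and is ``$\eta$-uniformly-continuous'' off small neighbourhoods of those finitely many jump times (the standard modulus-of-continuity estimate for $D_S$); combined with $l(0)=l(1)$ and the one-sided continuity at the endpoints of a based loop, this forces $\sup_{u\in[0,h]}d_S(l(1-h+u),l(0))\to 0$ and the analogous bound at $h$ as $h\downarrow 0$. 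I expect this controlling of the oscillation at the splice point to be the only genuinely delicate step, since one must be slightly careful that the jump of $l$ (if any) at the splice location $h$ does not spoil the estimate — but that is exactly why the Skorokhod metric, rather than the uniform metric, is used: a small further adjustment of $\lambda_h$ on a tiny interval around $h$ absorbs a single jump at the cost of an arbitrarily small extra slope term, and letting $h\to0$ along the continuity points of $l$ (a co-countable set, hence dense) makes even that unnecessary. Assembling these bounds gives $D(\Theta_h l,l)\to 0$, and since $\Theta_{-h}$ is handled symmetrically, the two-sided limit $\lim_{h\to0}D(\Theta_h l,l)=0$ follows.
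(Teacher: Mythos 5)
The central construction in your proposal breaks down. Any increasing bijection $\lambda:[0,1]\to[0,1]$ necessarily satisfies $\lambda(0)=0$ and $\lambda(1)=1$, so no admissible time change in \eqref{eq:intro.1} can send $[0,h]$ onto $[1-h,1]$ and $[h,1]$ onto $[0,1-h]$: that wrap-around map has a downward jump at $u=h$ and is not monotone. Nor does $\theta_{1-h}$ rescue it: a direct computation from the definition gives $\theta_t(\mathrm{id})(s)=\mathrm{id}(t+s)-\mathrm{id}(t)=s$ on the first branch and $1-t+(t+s-1)=s$ on the second, so $\theta_{1-h}(\mathrm{id})=\mathrm{id}$. (In the paper, $\theta_t$ serves a different purpose: it converts a reparametrization between $l_1$ and $l$ into one between $\Theta_r l_1$ and $\Theta_{\lambda(r)}l$, i.e.\ both loops get rotated; it cannot realize a rotation of one loop against the other, precisely because admissible $\lambda$'s fix the endpoints.) With $\lambda_h=\mathrm{id}$ your estimate degenerates to $\sup_u d_S(\Theta_h l(u),l(u))$, which does not tend to $0$ when $l$ has a jump, so the argument as written does not close.

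The repair is the construction you allude to but do not carry out: fix $0<a<b<1$ and take the genuine increasing bijection with $\lambda(0)=0$, $\lambda(a)=a+h$, $\lambda(b)=b+h$, $\lambda(1)=1$, linearly interpolated. On $[a,b]$ one has $l(\lambda(u))=\Theta_h l(u)$ exactly; the slope term equals $\max\left(\left|\log\frac{a+h}{a}\right|,\left|\log\frac{1-b-h}{1-b}\right|\right)$, which tends to $0$ as $h\to 0$ for fixed $a,b$ (it is small but not zero, contrary to your claim); and the sup term is bounded by twice the oscillation of $l$ on $[0,a+|h|]\cup[b-|h|,1]$. Letting $h\to0$ and then $a\downarrow0$, $b\uparrow1$ kills this oscillation using $l(0)=l(1)$ together with right-continuity at $0$ and left-continuity at $1$. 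Note that the genuinely delicate point is the behaviour at the endpoints $0$ and $1$, where the based-loop condition enters, not interior jumps of $l$ or the splice at $u=h$, on which your proposal concentrates its effort.
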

\begin{proof}
Without loss of generality, we can assume $l$ has time duration $1$. By definition, we have that
\begin{align*}
 D(\Theta_h(l),l)=&d(\Theta_h(l),l)\\
 =&\inf\Big\{\sup\limits_{s<t}\left|\log\frac{\lambda(t)-\lambda(s)}{t-s}\right|+\sup\limits_{u\in [0,1]}d_S\left(l(\lambda(u)),\Theta_h(l)\right):\\
&\lambda\text{ increasing bijection on }[0,1]\Big\}.
\end{align*}
Fix $0<a<b<1$, take $\lambda(0)=0,\lambda(a)=a+h,\lambda(b)=b+h,\lambda(1)=1$ and linearly interpolate $\lambda$ elsewhere. Then, 
\begin{align*}
 D(\Theta_h(l),l)\leq &\max\left(\left|\log\frac{a+h}{a}\right|,\left|\log\frac{1-b-h}{1-b}\right|\right)\\
&+2\sup\limits_{u,v\in[0,a+|h|]\cup[b-|h|,1]}|l(u)-l(v)|.
\end{align*}
Thus, for any $0<a<b<1$,
$$\limsup\limits_{h\rightarrow 0}D(\Theta_h(l),l)\leq 2\sup\limits_{u,v\in[0,a]\cup[b,1]}|l(u)-l(v)|.$$
Since $l$ is a based loop, $\inf\limits_{a,b}(\sup\limits_{u,v\in[0,a]\cup[b,1]}|l(u)-l(v)|)=0$. Therefore,
$$\lim\limits_{h\rightarrow 0}D(\Theta_h l,l)=0.$$
\end{proof}

\begin{lemma}\label{lem:invariant under circular translation}
Suppose $l_1$ is a based loop with time duration $t$ and $l$ is continuous at time $r\in[0,t[$. Then,
 $$\inf\{D(l_1,l):l\in l_2^{o}\}=\inf\{D(\Theta_r(l_1),l):l\in l_2^{o}\}.$$
\end{lemma}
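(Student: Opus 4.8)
The plan is to reduce first to the case of duration‑one loops and then to establish the equality by an auxiliary one‑sided estimate applied twice. Since $|\Theta_r l_1|=|l_1|$ and every element of $l_2^{o}$ has duration $|l_2|$, the term $\bigl||l_1|-|l_2|\bigr|$ in the definition of $D$ is common to both sides, and circular translation commutes with linear time rescaling; hence it suffices to treat the case $|l_1|=|l_2|=1$, where $D$ coincides with the Skorokhod metric $d$. Keeping the symbols $l_1,l_2,r$ for the rescaled objects, we then have $r\in[0,1)$ with the standing hypothesis that $l_1$ is continuous at $r$; the case $r=0$ is trivial, so assume $r>0$.

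The heart of the matter is the following claim: if $M$ is a duration‑one based loop continuous at $s\in[0,1)$, then $\inf\{d(\Theta_s M,l):l\in l_2^{o}\}\le\alpha_0$, where $\alpha_0:=\inf\{d(M,l):l\in l_2^{o}\}$. To prove it, fix $\eta>0$, pick $l\in l_2^{o}$ and an increasing bijection $\lambda$ of $[0,1]$ with $\sup_{a<b}\bigl|\log\tfrac{\lambda(b)-\lambda(a)}{b-a}\bigr|+\sup_{u}d_S(M(\lambda(u)),l(u))<\alpha_0+\eta=:c$, and set $r'=\lambda^{-1}(s)\in[0,1)$. The key point is the identity, valid for every $v\in[0,1]$,
\[\Theta_s M\bigl(\theta_{r'}\lambda(v)\bigr)=M(\lambda(u)),\qquad \Theta_{r'}l(v)=l(u),\]
with $u=r'+v$ if $v\le 1-r'$ and $u=r'+v-1$ otherwise; it is checked by matching the two branches in the piecewise definitions of $\theta_{r'}\lambda$, $\Theta_s$ and $\Theta_{r'}$, using $\lambda(r')=s$. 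It follows that $\sup_v d_S(\Theta_s M(\theta_{r'}\lambda(v)),\Theta_{r'}l(v))\le\sup_u d_S(M(\lambda(u)),l(u))$, and, since $\theta_{r'}\lambda$ has the same log‑derivative cost as $\lambda$ by Lemma~\ref{lem:invariant under regathering}, the bijection $\theta_{r'}\lambda$ witnesses $d(\Theta_s M,\Theta_{r'}l)\le c$.

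The only genuine obstacle is that $\Theta_{r'}l$ is a based loop — hence in $l_2^{o}$ — only when $l$ is left‑continuous at $r'$. To get around this I would choose $r'_k\to r'$ with each $r'_k\in[0,1)$ a left‑continuity point of $l$, which is possible since a c\`{a}dl\`{a}g path has only countably many left‑discontinuities, and set $s_k=\lambda(r'_k)\to s$. Running the computation above with $(r'_k,s_k)$ in place of $(r',s)$ gives $d(\Theta_{s_k}M,\Theta_{r'_k}l)\le c$ with $\Theta_{r'_k}l\in l_2^{o}$; and because $\Theta_s M$ \emph{is} a based loop (here the continuity of $M$ at $s$ is used), Lemma~\ref{lem:continuous under circular translation} applied to it yields $d(\Theta_{s_k}M,\Theta_s M)=d(\Theta_{s_k-s}(\Theta_s M),\Theta_s M)\to 0$. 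By the triangle inequality (which holds for the formula defining $d$ even for paths that are not based loops) we get $\inf\{d(\Theta_s M,l):l\in l_2^{o}\}\le c\le\alpha_0+\eta$, and letting $\eta\downarrow 0$ proves the claim.

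Finally I would apply the claim twice. With $(M,s)=(l_1,r)$ — legitimate because $l_1$ is continuous at $r$ — it gives $\inf\{d(\Theta_r l_1,l):l\in l_2^{o}\}\le\inf\{d(l_1,l):l\in l_2^{o}\}$. For the reverse inequality, note that $\Theta_r l_1$ is a based loop (again because $l_1$ is continuous at $r$) and is continuous at $1-r$ (because $l_1$ is right‑continuous at $0$, left‑continuous at $1$, and $l_1(0)=l_1(1)$); applying the claim to $(M,s)=(\Theta_r l_1,1-r)$ and using $\Theta_{1-r}\Theta_r=\Theta_1=\mathrm{id}$ gives $\inf\{d(l_1,l):l\in l_2^{o}\}\le\inf\{d(\Theta_r l_1,l):l\in l_2^{o}\}$, so the two infima coincide. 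I expect the delicate part to be precisely the bookkeeping forced by paths that fail to be based loops, together with tracking exactly where the continuity of $l_1$ at $r$ is indispensable; the $\theta_{r'}\lambda$ identity itself is a routine case check once Lemma~\ref{lem:invariant under regathering} is available.
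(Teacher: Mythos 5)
Your proof is correct and follows essentially the same route as the paper's: a near-optimal pair $(l,\lambda)$, the regathering invariance of Lemma~\ref{lem:invariant under regathering}, the simultaneous-rotation identity for the sup term, approximation by continuity points combined with Lemma~\ref{lem:continuous under circular translation} and the triangle inequality, and the substitution $r\mapsto 1-r$ (i.e.\ $|l_1|-r$) for the reverse inequality. The only cosmetic differences are your upfront normalization to duration one and that you perturb the cut point only on the $l$-side (relying on the extended formula for $d$ on non-based loops), whereas the paper chooses $r_n$ so that both translated paths are based loops.
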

\begin{proof}
Recall that $D(l_1,l)=\Big||l|-|l_1|\Big|+d(l_1^{\text{normalized}},l^{\text{normalized}})$ where 
\begin{multline*}
d(l_1^{\text{normalized}},l^{\text{normalized}})=\inf\Big\{\sup\limits_{u\in[0,1]}d_S(l_1^{\text{normalized}}(u),l^{\text{normalized}}(\lambda(u)))\\
+\sup\limits_{s<t}\left|\log\frac{\lambda(t)-\lambda(s)}{t-s}\right|:\lambda\text{ increasing bijection over }[0,1]\Big\}.
\end{multline*}
Then, for $\epsilon>0$, there exists $l\in l_2^o$ and $\lambda$ such that
\begin{multline}\label{eq:apd.1}
\sup\limits_{s<t}\left|\log\frac{\lambda(t)-\lambda(s)}{t-s}\right|+\sup\limits_{u\in[0,1]}d_S(l_1^{\text{normalized}}(u),l^{\text{normalized}}(\lambda(u)))\\
<\inf\{D(l_1,l):l\in l_2^{o}\}+\epsilon.
\end{multline}

Since the paths are c\`{a}dl\`{a}g, the following set is at most countable:
$$\{a:l_1\text{ jumps at time }a\text{ or }l\text{ jumps at }|l|\lambda(a/|l_1|)\}.$$
Thus, we can find a sequence $(r_n)_n$ such that
\begin{itemize}
 \item $r_n\downarrow r$ as $n\rightarrow \infty$,
 \item $\Theta_{r_n}(l_1)$ and $\Theta_{|l|\lambda(r_n/|l_1|)}(l)$ are both based loops.
\end{itemize}
By Lemma \ref{lem:invariant under regathering}, we have that
\begin{equation}\label{eq:apd.2}
 \sup\limits_{s<t}\left|\log\frac{\lambda(t)-\lambda(s)}{t-s}\right|=\sup\limits_{s<t}\left|\log\frac{\theta_{r_n/|l_1|}\lambda(t)-\theta_{r_n/|l_1|}\lambda(s)}{t-s}\right|.
\end{equation}
Meanwhile, we have that
\begin{multline}\label{eq:apd.3}
\sup\limits_{u\in[0,1]}d_S(l_1^{\text{normalized}}(u),l^{\text{normalized}}(\lambda(u)))\\
=\sup\limits_{u\in[0,1]}d_S\left((\Theta_{r_n}l_1)^{\text{normalized}}(u),(\Theta_{|l|\lambda(r_n/|l_1|)}l)^{\text{normalized}}(\theta_{r_n/|l_1|}\lambda(u))\right).
\end{multline}
Notice that $\Theta_{|l|\lambda(r_n/|l_1|)}l\in l_2^{o}$. Thus, by \eqref{eq:apd.1}+\eqref{eq:apd.2}+\eqref{eq:apd.3}, for any $\epsilon>0$, there exists $(r_n)_n$ with decreasing limit $r$ such that
\begin{equation}\label{eq:apd.4}
\inf\{D(\Theta_{r_n}l_1,l):l\in l_2^{o}\}<\inf\{D(l_1,l):l\in l_2^{o}\}+\epsilon.
\end{equation}
By triangular inequality of $D$,
$$D(\Theta_{r}l_1,l)\leq D(\Theta_{r_n}l_1,\Theta_{r}l_1)+D(\Theta_{r_n}l_1,l).$$
We take the infimum on both sides, then
\begin{equation*}
\inf\{D(\Theta_{r}l_1,l):l\in l_2^o\}\leq D(\Theta_{r_n}l_1,\Theta_{r}l_1)+\inf\{D(\Theta_{r_n}l_1,l):l\in l_2^o\}.
\end{equation*}
By \eqref{eq:apd.4},
\begin{equation}\label{eq:apd.5}
\inf\{D(\Theta_{r}l_1,l):l\in l_2^o\}\leq D(\Theta_{r_n}l_1,\Theta_{r}l_1)+\inf\{D(l_1,l):l\in l_2^{o}\}+\epsilon.
\end{equation}
By Lemma \ref{lem:continuous under circular translation}, for based loop $l$, $\lim\limits_{n\rightarrow\infty}D(\Theta_{r_n}l_1,\Theta_{r}l_1)=0$. By taking $n\rightarrow\infty$ in \eqref{eq:apd.5}, we see that
$$\inf\{D(\Theta_{r}l_1,l):l\in l_2^o\}\leq\inf\{D(l_1,l):l\in l_2^o\}+\epsilon\text{ for all }\epsilon>0.$$
Therefore,
$$\inf\{D(\Theta_{r}l_1,l):l\in l_2^o\}\leq\inf\{D(l_1,l):l\in l_2^o\}.$$
If we replace $l_1$ by $\Theta_{r}l_1$ and $r$ by $|l_1|-r$, we have the inequality in opposite direction:
$$\inf\{D(\Theta_{r}l_1,l):l\in l_2^o\}\geq\inf\{D(l_1,l):l\in l_2^o\}.$$
\end{proof}

Then, we turn to prove Proposition \ref{well defined}, \ref{Polish} and \ref{measurable}.

\begin{proof}[Proof of Proposition \ref{well defined}]\ 
\begin{itemize}
 \item Reflexivity: straightforward from the definition. 
 \item Triangular inequality: directly from Lemma \ref{lem:invariant under circular translation}.
 \item $D^{o}(l_1^{o},l_2^{o})=0\Longrightarrow l_1^o=l_2^o$: by Lemma \ref{lem:invariant under circular translation}, it is enough to show that
$$\inf\{D(l_1,l):l\in l_2^o\}=0\Longrightarrow l_1\in l_2^{o}.$$
Suppose $\inf\{D(l_1,l):l\in l_2^o\}=0$. Then, we can find a sequence $(r_n)_n$ with limit $r$ such that $\lim\limits_{n\rightarrow\infty}D(\Theta_{r_n}l_2,l_1)=0$. Since $l_1(|l_1|-)=l_1(0)$, $l_2$ must be continuous at $r$ and $\lim\limits_{n\rightarrow\infty}\Theta_{r_n}l_2=\Theta_rl_2$ by Lemma \ref{lem:continuous under circular translation}. Thus, $l_1=\Theta_rl_2$.
\end{itemize}
\end{proof}

\begin{proof}[Proof of Proposition \ref{Polish}]\ 
\begin{itemize}
 \item Completeness: given a Cauchy sequence $(l_n^{o})_n$, one can always extract a sub-sequence $(l_{n_k}^{o})_k$ such that $D^{o}(l_{n_k}^{o},l_{n_{k+1}}^{o})<2^{-k}$. By Lemma \ref{lem:invariant under circular translation}, one can find in each equivalence class $l_{n_k}^{o}$ a based loop $L_k$ such that $D(L_{k},L_{k+1})<2^{-k}$. By the completeness of $D$, there exists a based loop $L$ such that $\lim\limits_{k\rightarrow\infty}L_k=L$. Thus, $\lim\limits_{k\rightarrow\infty}l_{n_k}^{o}=L^{o}$. So is the same for $(l_n^{o})_n$.
 \item Separability: the based loop space is separable. Then, as the continuous image, the loop space is separable.
\end{itemize}
\end{proof}

\begin{proof}[Proof of Proposition \ref{measurable}]
For bounded continuous function $f:S^n\rightarrow \mathbb{R}$, $l\rightarrow\langle l,f\rangle$ is continuous in $l$. In particular, it is measurable. By $\pi-\lambda$ theorem for functions, $l\rightarrow\langle l,f\rangle$ is measurable for all bounded measurable $f:S^n\rightarrow\mathbb{R}$.
\end{proof}

\bibliographystyle{amsalpha}
\bibliography{reference}
\end{document}